\documentclass{amsart}

\usepackage{amssymb,amsthm,graphicx,caption,mwe,float,adjustbox, comment, hyperref}%
\usepackage{amsmath,amsfonts,amsthm,amssymb,indentfirst,epic,xurl,graphics,needspace}

\newtheorem{theorem}{Theorem}[section]
\newtheorem{corollary}[theorem]{Corollary}

\newtheorem{lemma}[theorem]{Lemma}

\newtheorem{definition}[theorem]{Definition}

\title [Some answers regarding factorizations of finite groups]{Some answers regarding factorizations of finite groups}
\author[McCulloch]{Ryan McCulloch}
\address{Department of Mathematics and Statistics, Binghamton University, Binghamton, NY 13902; rmccullo1985@gmail.com}
\date{\today}

\begin{document}
	
\begin{abstract}
In this note we answer some questions of G. M. Bergman concerning factorizations of finite groups.  Such questions originated when M. H. Hooshmand asked whether, given a finite group $G$ and a factorization $|G| = n_1 \cdots n_k$, one can always find subsets $A_1, \dots , A_k$ of $G$ with $|A_i| = n_i$ such that $G = A_1 \cdots A_k$.  This was Question 19.35 of the Kourovka Notebook.  G. M. Bergman provided a counterexample for $k=3$, and Kabenyuk provided counterexamples for all $k \geq 3$.  We present these counterexamples with direct proofs that mirror Bergman's original $k=3$ argument, and we answer another recent question of Bergman.
\end{abstract}

\subjclass[2020]{Primary 20D60}
\keywords{Kourovka notebook, finite groups, factorizations}

\maketitle

\section{Introduction}
We begin with a definition appearing in \cite{berg, hoosh1, sands}.

\begin{definition}
Suppose $G$ is a group, $k$ is a positive integer, and $A_1$, \dots , $A_k$ are subsets of $G$. In this situation, if the multiplication map $A_1 \times \cdots \times A_k \rightarrow G$ is bijective, we shall call the product $G = A_1 \cdots A_k$ a (k-fold) \textnormal{factorization} of G.
\end{definition}

If $G$ is finite, the above bijectivity condition can be reformulated as $G = A_1 \cdots A_k$ and $|G| = |A_1| \cdots |A_k|$.  This is how the condition is described in \cite{kour}, Question 19.35.  Also we shall declare that $1 < |A_i| < |G|$ for each $A_i$; see \cite{hoosh1} where this additional condition is explicitly stated.

The case $k=2$ is perhaps the most interesting case, because the question of whether every group $G$ admits a $2$-fold factorization for every factorization $(|A_1|,|A_2|)$ of its order is still open, see Question 20.37 in \cite{kour}.  Progress has been made and the question has been reduced to one for simple groups, see \cite{bild, hoosh1}.  

For $k=3$, Bergman \cite{berg} provided a counterexample for $G$ the alternating group $A_4$ and the factorization $(2,3,2)$ of $|G|$.  In that paper, Bergman asks whether there is a way to generalize the arguments to produce counterexamples for $k > 3$.  The question of whether there is any integer $k$ where we have $k$-fold factorizations existing for every finite group $G$ and every factorization of $|G|$ is Question VI in \cite{hoosh1}.  Kabenyuk \cite{kab} provided counterexamples for every $k \geq 3$.  We present these counterexamples and give a direct proof mirroring Bergman's proof for the $k=3$ case.

\section{The Example}

Lemma 2.1 in \cite{berg} will be crucial for our argument.  That lemma also has a third part regarding the sets $A_i$ for $1 < i < k$, however we do not need it and omit it in our statement below.  The proof is in \cite{berg}.

\begin{lemma}\label{lem: first_last}
Let $A_1 \cdots A_k$ be a factorization of a finite group $G$. Then
\begin{enumerate}
\item $|A_1|$ divides the order of the subgroup of $G$ generated by the set ${A_1}^{-1} A_1 = \{ g^{-1}h \,\, | \,\, g,h \in A_1 \}$, which can also be described as generated by any one of the subsets $g^{-1} A_1$ $(g \in A_1)$.  Moreover, that order is also the order of the subgroup generated by $A_1{A_1}^{-1} = \{ gh^{-1} \,\, | \,\, g,h \in A_1 \}$, equivalently, by any one of the subsets $A_1 g^{-1}$ $(g \in A_1)$.
\item Similarly, $|A_k|$ divides the order of the subgroup of $G$ generated by the set ${A_k}^{-1} A_k$, equivalently, by any of the subsets $g^{-1} A_k$ $(g \in A_k)$, and that order is also the order of the subgroup generated by $A_k{A_k}^{-1}$, equivalently, by any of the subsets $A_k g^{-1}$ $(g \in A_k)$.
\end{enumerate}
\end{lemma}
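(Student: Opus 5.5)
Since the factorization condition is invariant under translation, we may normalize. Fix $g\in A_1$ and replace $A_1$ by $g^{-1}A_1$ and $A_2$ by $gA_2$; the product $A_1\cdots A_k$ and all the cardinalities are unchanged, so we still have a factorization. Hence we may assume $1\in A_1$, and then $A_1 \subseteq H$, where $H=\langle g^{-1}A_1\rangle$ is the subgroup in part (1). First we record the equalities among the generated subgroups. For $g,h\in A_1$ we have $g^{-1}A_1=(g^{-1}h)(h^{-1}A_1)$ with $g^{-1}h\in\langle h^{-1}A_1\rangle$, so each $\langle g^{-1}A_1\rangle$ contains all the others. Thus they all coincide, and they also coincide with $\langle A_1^{-1}A_1\rangle$. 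In the same way all the $\langle A_1g^{-1}\rangle$ coincide with $K=\langle A_1A_1^{-1}\rangle$. Finally $A_1g^{-1}=g(g^{-1}A_1)g^{-1}$, so $K=gHg^{-1}$ and $|K|=|H|$.

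The main point is the divisibility $|A_1|\mid |H|$. Write $B=A_2\cdots A_k$. The factorization says that every element of $G$ is uniquely $ab$ with $a\in A_1$ and $b\in B$, and $|B|=|A_2|\cdots|A_k|$. Now decompose $G$ into the right cosets $Hx$, with $x$ ranging over a set of representatives. Since $A_1\subseteq H$, the equality $ab\in Hx$ is equivalent to $b\in Hx$. Hence for each coset
\[
Hx=A_1\,(B\cap Hx),
\]
and the uniqueness of the factorization is inherited. It follows that $|H|=|A_1|\cdot|B\cap Hx|$, so $|A_1|$ divides $|H|$. This completes part (1).

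Part (2) follows by symmetry. Apply the inversion map: $G=A_k^{-1}\cdots A_1^{-1}$ is again a factorization, since inversion is a bijection reversing products. Part (1) then gives that $|A_k|=|A_k^{-1}|$ divides the order of $\langle A_kA_k^{-1}\rangle$. By the equalities above, that order also equals the order of $\langle A_k^{-1}A_k\rangle$, and the descriptions via $g^{-1}A_k$ and $A_kg^{-1}$ follow exactly as before. Alternatively, one can rerun the coset argument directly, using left cosets $xH$ and writing $A_1\cdots A_{k-1}$ in place of $B$.

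I expect no real obstacle here. The only care needed is in the normalization step: we should check that translating $A_1$ and $A_2$ preserves bijectivity of the multiplication map, which is immediate because the translated map is the original composed with bijections.
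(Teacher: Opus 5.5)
Your main argument is sound. The identification of the generated subgroups (with $K=gHg^{-1}$), the right-coset count $|H|=|A_1|\,|B\cap Hx|$ once $A_1\subseteq H$, and the passage from (1) to (2) via the inverted factorization $G=A_k^{-1}\cdots A_1^{-1}$ are all correct. The paper does not reprove this lemma but cites Bergman, and your coset-counting argument is the standard one.

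The one genuine error is in the normalization. Replacing $A_1$ by $g^{-1}A_1$ and $A_2$ by $gA_2$ does \emph{not} leave the product unchanged, since $(g^{-1}a)(gb)=g^{-1}agb$. In effect the new multiplication map is that of $(g^{-1}A_1g)\times A_2\times\cdots\times A_k$: you have conjugated $A_1$, and this need not give a factorization. For example, in $S_3$ the product $\{e,(13),(23)\}\cdot\{e,(12)\}$ is a factorization. But with $g=(13)$, your new sets $g^{-1}A_1=\{(13),e,(13)(23)\}$ and $gA_2=\{(13),(13)(12)\}$ satisfy $(13)\cdot\bigl((13)(12)\bigr)=(12)=\bigl((13)(23)\bigr)\cdot(13)$. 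So your closing remark, that the translated map is the original one composed with bijections, is exactly the step that fails.

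The repair is immediate. Translate $A_1$ alone: $(g^{-1}A_1)A_2\cdots A_k=g^{-1}G=G$ is a factorization, because the new map is the old one followed by left multiplication by $g^{-1}$ (this is Lemma~\ref{lem: e}). Since $(g^{-1}A_1)^{-1}(g^{-1}A_1)=A_1^{-1}A_1$, the subgroup $H$ is unchanged and now contains the new first factor, so your coset argument applies verbatim. Alternatively, use $A_1g^{-1}$ and $gA_2$, which really does preserve every product. The new first factor then lies in $K$ rather than $H$, which is harmless because you have shown $|K|=|H|$. With either correction the proof is complete.
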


We also need the following lemma which appears in \cite{berg, sands}. 

\begin{lemma}\label{lem: e}
If $G = A_1 \cdots A_k$ is a factorization of a group $G$, then for all $g,h \in G$, $$(gA_1) \cdot A_2 \cdots A_{k-1} \cdot (A_k h)$$ is also a factorization of G.

Hence if for some positive integers $n_1, \dots, n_k$, $G$ has a $k$-fold factorization with $|A_i| = n_i$ $(i = 1, \dots, k)$, it has such a factorization in which $A_1$ and $A_k$ both contain the identity element $e$.
\end{lemma}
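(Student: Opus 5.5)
The plan is to check bijectivity of the multiplication map directly, using that left or right translation by a fixed group element is a bijection of $G$. Then I will deduce the second assertion by choosing $g$ and $h$ appropriately.

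For the first assertion, write $A_1' = gA_1$ and $A_k' = A_k h$. The map $A_1 \times A_2 \times \cdots \times A_k \to A_1' \times A_2 \times \cdots \times A_{k-1} \times A_k'$ sending $(a_1, \dots, a_k)$ to $(ga_1, a_2, \dots, a_{k-1}, a_k h)$ is a bijection. Its inverse multiplies the first coordinate by $g^{-1}$ on the left and the last by $h^{-1}$ on the right. Composing it with the multiplication map $A_1' \times \cdots \times A_k' \to G$ gives $(a_1,\dots,a_k) \mapsto g\,(a_1 a_2 \cdots a_k)\,h$. This is the original multiplication map followed by $x \mapsto gxh$. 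The original map is bijective by hypothesis, and $x \mapsto gxh$ is a bijection $G \to G$ with inverse $y \mapsto g^{-1} y h^{-1}$. So the multiplication map on $A_1' \times \cdots \times A_k'$ is a composite of bijections and hence bijective. In the finite-group formulation this says $G = (gA_1)A_2\cdots A_{k-1}(A_k h)$, with $|gA_1| = |A_1|$ and $|A_k h| = |A_k|$.

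For the second assertion, start from any factorization with $|A_i| = n_i$. Choose $a \in A_1$ and $b \in A_k$; these exist since $n_i \geq 1$. Apply the first part with $g = a^{-1}$ and $h = b^{-1}$. Then $e = a^{-1}a \in gA_1$ and $e = b b^{-1} \in A_k h$. The sizes are unchanged, since translations are injective, so the new factorization has $|A_i| = n_i$ for all $i$. When $k = 1$ the first and last factors coincide, giving $G = gA_1h$, which is again fine.

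There is no real obstacle here. The only point needing care is that the translation must be applied on the left of $A_1$ and on the right of $A_k$. Translating on the other side would not commute past the middle factors, so in general it would not preserve the factorization in a nonabelian group.
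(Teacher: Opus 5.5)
Your proof is correct and complete. You write the new multiplication map as the composite of the inverse coordinatewise translation, the original multiplication map, and the bijection $x\mapsto gxh$ of $G$; this is the standard argument, and choosing $g=a^{-1}$, $h=b^{-1}$ with $a\in A_1$, $b\in A_k$ gives the second part. The paper does not prove this lemma itself (it cites Bergman and Sands), so there is no in-paper argument to compare against. Because your argument works directly with bijectivity rather than with cardinalities, it also covers infinite groups, as the lemma's statement allows.
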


We are now ready to present the counterexample.  Let $k \geq 3$ and let $G$ be the Frobenius group of order $2^{k-1}(2^{k-1}-1)$ with kernel $N$ an elementary abelian $2$-group of order $2^{k-1}$ and complement $A$ cyclic of order $2^{k-1}-1$.  Viewing $N$ as the additive group of the field $\mathbb{F}_{2^{k-1}}$ and viewing $A$ as generated by a primitive element in the multiplicative group of the field, we have $A$ acting on $N$ via field multiplication.  Two key properties of this group that we need are that the action of $A$ on $N$ is transitive on the nonidentity elements of $N$, and that the elements of $G$ of even order are precisely the nonidentity elements of $N$, all of which have order $2$.

When $k=3$ the example is the alternating group $A_4$ and as we will see, it is the same example given by Bergman in \cite{berg}.  Our arguments mirror those for the $k=3$ case in \cite{berg}.

\begin{theorem}\label{thm1}
Let $k \geq 3$ and let $G$ be the Frobenius group of order $2^{k-1}(2^{k-1}-1)$ as described above.  Then $G$ possesses no $k$-fold factorization for the factorization $(2,\dots,2,2^{k-1}-1,2)$ of $|G|$.
\end{theorem}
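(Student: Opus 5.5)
We will use Lemma~\ref{lem: e} to normalize the first and last factors, use Lemma~\ref{lem: first_last} to force them into the kernel $N$, and then get a contradiction by restricting the factorization to a suitably chosen coset of $N$.

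Suppose for contradiction that $G = A_1 \cdots A_k$ is a factorization with $|A_1| = \dots = |A_{k-2}| = 2$, $|A_{k-1}| = 2^{k-1}-1$ and $|A_k| = 2$. By Lemma~\ref{lem: e} we may assume $e \in A_1$ and $e \in A_k$, so $A_1 = \{e,a\}$ and $A_k = \{e,b\}$ with $a,b \neq e$.

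\textbf{Step 1: $a,b \in N$.} The set $A_1^{-1}A_1$ generates $\langle a \rangle$. By Lemma~\ref{lem: first_last}(1), $2$ divides $|\langle a\rangle|$, so $a$ has even order. Since the elements of even order in $G$ are exactly the nonidentity elements of $N$, we get $a \in N\setminus\{e\}$. Lemma~\ref{lem: first_last}(2) gives $b \in N\setminus\{e\}$ in the same way.

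\textbf{Step 2: reduce to a partition of $N$.} Put $B = A_2 \cdots A_{k-1}$, a set of size $|G|/4$. The factorization says that $G$ is the disjoint union of $B$, $aB$, $Bb$ and $aBb$. Fix a coset $Nx$ with $x \in A$, and let $S = B \cap Nx$. Because $a, b \in N$ and $N \trianglelefteq G$, left multiplication by $a$ and right multiplication by $b$ preserve $Nx$. Hence $Nx$ is the disjoint union of $S$, $aS$, $Sb$ and $aSb$, and in particular $|S| = 2^{k-1}/4 = 2^{k-3} \geq 1$, using $k \geq 3$. Write $S = Tx$ with $T \subseteq N$, and use additive notation in $N$. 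Since $nxb = n(xbx^{-1})x$, the four pieces become
\[
T,\quad a+T,\quad c+T,\quad a+c+T, \qquad \text{where } c = xbx^{-1},
\]
and these must partition $N$.

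\textbf{Step 3: choose the coset.} $A$ acts transitively on $N\setminus\{e\}$, and $b \neq e$, so we can choose $x \in A$ with $xbx^{-1} = a$. For this $x$ we have $c = a$. Then $a + c + T = T$ because $N$ has exponent $2$. Since $T \neq \emptyset$, the pieces $T$ and $a+c+T$ are not disjoint, which contradicts Step 2. So no such factorization exists.

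The step that needs the most care is Step 2. We must check that the coset decomposition is compatible with all four translates, which holds because $a$ and $b$ lie in the normal subgroup $N$. We must also note that conjugation by $x$ turns right translation by $b$ into left translation by $c$. Once this is in place, transitivity of $A$ on $N\setminus\{e\}$ immediately produces the bad coset.
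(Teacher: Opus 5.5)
Your proof is correct and follows essentially the same route as the paper. You normalize $A_1=\{e,a\}$ and $A_k=\{e,b\}$, use Lemma~\ref{lem: first_last} to force $a,b\in N$, and then use transitivity of $A$ on $N\setminus\{e\}$ to get an element $g\in A_2\cdots A_{k-1}$ with $ag=gb$, giving the collision $a\cdot g\cdot e=e\cdot g\cdot b$ (your $aSb=S$). Your coset-partition bookkeeping, which shows $|B\cap Nx|=2^{k-3}\ge 1$, is a more explicit version of the paper's observation that $A_2\cdots A_{k-1}$ must meet every coset of $N$.
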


\begin{proof}
Suppose $G = A_1 \cdots A_k$ is a factorization with $|A_i|=2$ for all $1 \leq i \leq k-2$, $|A_{k-1}| = 2^{k-1} - 1$, and $|A_k| = 2$.  By Lemma $\ref{lem: e}$ we can assume without loss of generality that $A_1$ and $A_k$ have the forms $\{e,a\}$ and $\{e,b\}$ respectively.  By Lemma \ref{lem: first_last}, $|A_1|$ divides $|\langle {A_1}^{-1}A_1 \rangle| = |\langle a \rangle|$ and $|A_k|$ divides $|\langle {A_k}^{-1}A_k \rangle| = |\langle b \rangle|$.  The only even order elements of $G$ are the elements of $N$ which are all order $2$, hence we conclude that $A_1$ and $A_k$ are in fact subgroups of $N$ (which may or may not be distinct).

Write $S = A_2 \cdots A_{k-1}$.  Note that for each $g \in S$ we have $A_1gA_k \subseteq NgN = Ng = gN$ as $N$ is a normal subgroup of $G$.  And so in order for $G = A_1 \cdot S \cdot A_k$ to hold, we must have $S$ containing representatives from every coset of $N$ in $G$.  As the action of the complement $A$ is transitive on the nonidentity elements of $N$, there exists $g \in S$ such that $g$ conjugates $A_1$ to $A_k$.

Hence $A_1 g A_k = g A_k A_k$.  But the multiplication map $A_k \times A_k \rightarrow A_k$ is not one-to-one; and it follows that the multiplication map $A_1 \times \cdots \times A_k \rightarrow G$ is not one-to-one, contradicting $G = A_1 \cdots A_k$ being a factorization.
\end{proof}

\section{A Sufficient Condition for $k$-Factorizations to Exist}

In a recent note by Bergman \cite{berg2}, a sufficient condition is given for $2$-factorizations of a finite group to exist.  Bergman asks whether a similar approach can be used to give a sufficient condition for $k$-factorizations to exist for larger $k$.  Our theorem below is a direct generalization of Bergman's Theorem 1.1 in \cite{berg2}, and the argument mirrors his argument.  The $k=2$ case yields Bergman's result.

\begin{theorem}\label{thm2}
Let $G$ be a finite group, and
\begin{equation}\begin{minipage}[c]{35pc}\label{d.G0Gn}
$\{e\}\,=\,G_0\ <\ G_1\ <\ \dots\ <\ G_n\,=\,G$ \ $(n\geq 0)$
\end{minipage}\end{equation}
a chain of subgroups of $G$.

Then for any factorization $|G| = a_1 a_2 \cdots a_k$ with $a_1, \dots, a_k$ positive integers, a {\em sufficient} condition for there
to exist subsets $A_1,\dots,A_k \subseteq G$ of cardinalities $|A_i| = a_i$ for all $i$ such that $$G = A_1 \cdots A_k$$
is that there exist subsets $s_1 , \dots, s_k$ of $\{1,\dots,n\}$, $s_i \cap s_j = \emptyset$ for $i \neq j$ and $\bigcup_{i \in \{1,\dots,n\}} s_i = \{1,\dots,n\}$ such that 
\begin{equation*}\begin{minipage}[c]{35pc}
for each $j \in \{1,\dots,n\}$, if $j \in s_t$, then $s_a \cap \{1,\dots,j\} = \emptyset$ holds either for all $a < t$ or for all $a > t$,
\end{minipage}\end{equation*}
and such that for each $t \in \{1,\dots,k\}$, $\prod_{i\in s_t}\,|G_i:\,G_{i-1}| = a_t$.
\end{theorem}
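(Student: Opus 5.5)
The plan is to build the sets $A_1,\dots,A_k$ by induction along the chain $G_0<G_1<\dots<G_n$, adjoining one transversal at each step. Concretely, for each $j\in\{0,\dots,n\}$ I will construct subsets $A_1^{(j)},\dots,A_k^{(j)}$ of $G_j$ with three properties. First, $G_j=A_1^{(j)}\cdots A_k^{(j)}$ is a factorization in the sense of the Definition, that is, the multiplication map is bijective. Second, $|A_t^{(j)}|=\prod_{i\in s_t,\,i\le j}|G_i:G_{i-1}|$ for every $t$. Third, $A_t^{(j)}=\{e\}$ whenever $s_t\cap\{1,\dots,j\}=\emptyset$. The base case $j=0$ takes all $A_t^{(0)}=\{e\}$, which is trivially a factorization of $G_0=\{e\}$. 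At the end, $j=n$, the second property together with $\prod_{i\in s_t}|G_i:G_{i-1}|=a_t$ and the fact that the $s_t$ partition $\{1,\dots,n\}$ gives $|A_t^{(n)}|=a_t$ and $G=A_1^{(n)}\cdots A_k^{(n)}$, as required.

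For the inductive step, let $j\ge 1$ and let $t$ be the unique index with $j\in s_t$. By hypothesis, one of two cases holds: either $s_a\cap\{1,\dots,j\}=\emptyset$ for all $a<t$, or $s_a\cap\{1,\dots,j\}=\emptyset$ for all $a>t$. In either case the same holds with $j-1$ in place of $j$.

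In the first case the third property gives $A_a^{(j-1)}=\{e\}$ for all $a<t$. Hence $G_{j-1}=A_t^{(j-1)}\cdots A_k^{(j-1)}$. I choose a left transversal $T$ of $G_{j-1}$ in $G_j$, so that $G_j=TG_{j-1}$ with $|T|=|G_j:G_{j-1}|$. I set $A_t^{(j)}=TA_t^{(j-1)}$ and leave all other sets unchanged. Then
$$A_1^{(j)}\cdots A_k^{(j)}=T\,A_t^{(j-1)}\cdots A_k^{(j-1)}=TG_{j-1}=G_j.$$

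The second case is symmetric. Here $A_a^{(j-1)}=\{e\}$ for all $a>t$, so $G_{j-1}=A_1^{(j-1)}\cdots A_t^{(j-1)}$. I take a right transversal $T$, so that $G_j=G_{j-1}T$, and set $A_t^{(j)}=A_t^{(j-1)}T$. The same computation then gives $G_{j-1}T=G_j$.

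The one point needing care is bijectivity, including the claim that $|TA_t^{(j-1)}|=|T|\,|A_t^{(j-1)}|$. I will settle it by counting. The product of the intended cardinalities is $|G_j:G_{j-1}|\cdot|G_{j-1}|=|G_j|$. The multiplication map
$$T\times A_1^{(j-1)}\times\cdots\times A_k^{(j-1)}\to G_j$$
(with $T$ inserted in position $t$ as above) is surjective by the computation just given. Its domain has exactly $|G_j|$ elements, so it is bijective. In particular the map $T\times A_t^{(j-1)}\to A_t^{(j)}$ is injective, which gives $|A_t^{(j)}|=|T|\,|A_t^{(j-1)}|$ and hence the second property. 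Bijectivity of the full map $A_1^{(j)}\times\cdots\times A_k^{(j)}\to G_j$ then follows because this map factors the bijection above. The third property is preserved, since only $A_t$ changed and now $j\in s_t$.

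The main obstacle is exactly where the combinatorial hypothesis is used. It guarantees that at stage $j$ the factor being enlarged sits at the extreme left or extreme right of the nontrivial factors. Only in that position can a transversal be absorbed without interfering with the other factors. Once this is seen, the rest is routine.
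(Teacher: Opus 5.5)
Your proposal is correct and follows the paper's proof. Both argue by induction along the chain, enlarging $A_t$ by a left (resp.\ right) transversal of $G_{j-1}$ in $G_j$ exactly when all factors to its left (resp.\ right) are still trivial. The only differences are bookkeeping: you keep the untouched factors literally equal to $\{e\}$ rather than normalizing $S=\{e\}$ via Lemma~\ref{lem: e}, and you spell out the counting argument for bijectivity and for $|TA_t^{(j-1)}|=|T|\,|A_t^{(j-1)}|$, which the paper leaves implicit.
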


\begin{proof}
Given a finite group $G$, a chain~\eqref{d.G0Gn}, and such subsets $s_1, \dots s_k$, we shall show by induction on $j$ that for each $j \leq n$, 
\begin{equation}\begin{minipage}[c]{35pc}\label{d.A1jA2j}
there exists a factorization $G_j = A_{1,j} \cdots A_{k,j}$, such that\\
 for each $t \in \{1,\dots,k\}$, $|A_{t,j}| = \Pi_{i \in s_t \cap \{1,\dots,j\}} | G_i : G_{i-1}|$.
\end{minipage}\end{equation}

This is trivially true for $j=0$.   (In that case the numerical products shown are over the empty set, hence
their values are all $1 = |\{e\}|$, and $G_0 = \{e\} = \{e\}\cdots\{e\}$.)

So let $j < n$, and assume inductively that we have a factorization $G_j = A_{1,j} \cdots A_{k,j}$ as in~\eqref{d.A1jA2j}.  

Now $j+1 \in s_t$ for some unique $t$.  We know that $s_a \cap \{1,\dots,j+1\} = \emptyset$ holds either for all $a < t$ or for all $a > t$.  Without loss of generality, suppose 
\begin{equation*}\begin{minipage}[c]{35pc}
$s_a \cap \{1,\dots,j+1\} = \emptyset$ for all $a < t$.
\end{minipage}\end{equation*}
Thus $|A_{a,j}| = 1$ for all $a < t$.  Write $S = A_{1,j} \cdots A_{t-1,j}$.  So 
\begin{equation}\begin{minipage}[c]{35pc}\label{d.i_in_s1}
$G_j = S A_{t,j} \cdots A_{k,j}$, and by Lemma \ref{lem: e}, we may assume $S = \{ e \}$.
\end{minipage}\end{equation}

Let $C_{j+1}$ be a full set of representatives of the left cosets of $G_j$ in $G_{j+1}$, and let
$A_{t,j+1} = C_{j+1} A_{t,j}$ and let $A_{i,j+1} = A_{i,j}$ for all $i \neq t$.  

Given that $A_{1,j} \dots A_{k,j}$have the cardinalities indicated in~\eqref{d.A1jA2j}, we see that the new sets $A_{1,j+1} \dots A_{k,j+1}$ will
have cardinalities as in the $j\,{+}\,1$ case of~\eqref{d.A1jA2j}.  Finally, assuming~\eqref{d.i_in_s1}, we obtain $$A_{1,j+1} \cdots A_{k,j+1} = C_{j+1} A_{t,j} \cdots A_{k,j} = C_{j+1} G_j = G_{j+1}.$$

So we have proved the $j + 1$ case of~\eqref{d.A1jA2j};
so by induction,~\eqref{d.A1jA2j} holds for all $j \leq n$. The case $j = n$ is
our desired conclusion.
\end{proof}

Note that when $k=2$, the condition that
\begin{equation*}\begin{minipage}[c]{35pc}
for each $j \in \{1, \dots, n\}$, if $j \in s_t$, then $s_a \cap \{1,\dots,j\} = \emptyset$ holds either for all $a < t$ or for all $a > t$
\end{minipage}\end{equation*}
 will be true vacuously, as $t$ is either $1$ or $2$ and so either there are no $a < t$ or there are no $a > t$.  Hence for the $k=2$ case we have no additional restrictions on the sets $s_1$ and $s_2$, and we obtain Bergman's Theorem 1.1 of \cite{berg2}.

It is known that if $G$ is solvable, then a $2$-factorization exists for every factorization $(a,b)$ of $|G|$, see \cite{hoosh1,bild}.  Theorem \ref{thm2} (and Theorem 1.1 in \cite{berg2}) provide a new proof.

\begin{corollary}\label{solv}
 If $G$ is a solvable finite group, then a $2$-factorization of $G$ can be realized via Theorem \ref{thm2} for every factorization $(a,b)$ of $|G|$.
\end{corollary}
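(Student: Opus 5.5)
We will apply Theorem \ref{thm2} with $k=2$, where, as noted just above, the interleaving condition on the sets $s_1,s_2$ holds vacuously. It therefore suffices to find a chain of subgroups $\{e\}=G_0<G_1<\dots<G_n=G$ and a partition $\{1,\dots,n\}=s_1\sqcup s_2$ with $\prod_{i\in s_1}|G_i:G_{i-1}|=a$ and $\prod_{i\in s_2}|G_i:G_{i-1}|=b$. The natural choice is a chain in which every index $|G_i:G_{i-1}|$ is prime. Then the multiset of indices is exactly the multiset of prime factors of $|G|$ counted with multiplicity. Given $|G|=ab$, we let $s_1$ consist of positions carrying the primes of $a$, one position for each prime factor of $a$ counted with multiplicity, and let $s_2$ be the remaining positions. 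Their indices multiply to $|G|/a=b$.

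The key step is to show that a finite solvable group has a chain of subgroups with all indices prime. First take a chief series $\{e\}=N_0\lhd N_1\lhd\dots\lhd N_m=G$ of normal subgroups of $G$. Because $G$ is solvable, each chief factor $N_{i}/N_{i-1}$ is elementary abelian of order $p_i^{r_i}$. Next refine each step: inside the elementary abelian group $N_i/N_{i-1}\cong(\mathbb{Z}/p_i)^{r_i}$, choose a flag of subgroups $V_0<V_1<\dots<V_{r_i}$ with $|V_{l}:V_{l-1}|=p_i$. Their preimages in $N_i$ give subgroups $N_{i-1}=H_0<H_1<\dots<H_{r_i}=N_i$, each of index $p_i$ in the next. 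Concatenating these refinements over all $i$ gives the required chain. Normality of the intermediate subgroups is not needed, since Theorem \ref{thm2} only asks for a chain of subgroups.

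An alternative is to use a composition series. For a solvable group this has cyclic factors of prime order, which gives the chain directly and is perhaps cleaner.

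No step is a real obstacle. The only point to handle carefully is the bookkeeping: $a$ must be realized as a sub-multiset product of the indices, which is immediate from unique factorization once every index is prime. We also note that $n\ge 0$ covers the trivial group, and that the degenerate cases $a=1$ or $b=1$ are handled by taking $s_1$ or $s_2$ empty.
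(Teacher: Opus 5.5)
Your proposal is correct and follows essentially the paper's route: take a chain of subgroups of $G$ with every index prime, then split the positions into $s_1,s_2$ according to the prime factorization of $a$. The paper uses a composition series, which is your alternative. Your chief-series refinement gives the same kind of chain, and in fact a composition series, since each $N_i/N_{i-1}$ is abelian.
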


\begin{proof}
As $G$ is solvable, $G$ has a composition series where each factor group is cyclic of prime order.  Thus for any factorization $(a,b)$ of $|G|$, $a$ is a product of prime indices in its composition series, and so is $b$.
\end{proof}

We close with a corollary regarding the alternating group $A_4$.

\begin{corollary}
A $k$-factorization of the alternating group $A_4$ can be realized via Theorem \ref{thm2} for every factorization of $|A_4|$ except for $(2,3,2)$, for which no $3$-factorization exists.
\end{corollary}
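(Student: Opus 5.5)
The plan is to enumerate all factorizations of $|A_4| = 12$ into factors strictly between $1$ and $12$, and for each ordered tuple other than $(2,3,2)$ exhibit a chain of subgroups and an assignment $s_1,\dots,s_k$ satisfying the hypotheses of Theorem \ref{thm2}. The exceptional case $(2,3,2)$ is handled by Theorem \ref{thm1} with $k=3$, since $A_4$ is the Frobenius group of order $2^2\cdot 3$ with kernel $V_4$ and complement of order $3$. The ordered factorizations to handle are the $2$-fold ones $(2,6),(6,2),(3,4),(4,3)$; the $3$-fold ones $(2,2,3),(3,2,2)$ (besides $(2,3,2)$); and nothing with $k\geq 4$, because $12=2\cdot2\cdot3$ has only three prime factors and every factor must exceed $1$.

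For the $2$-fold cases, $A_4$ is solvable, so Corollary \ref{solv} applies directly. Concretely, I use the chains $\{e\}<\langle x\rangle<V_4<A_4$ (indices $2,2,3$) and $\{e\}<C_3<A_4$ (indices $3,4$). The condition of Theorem \ref{thm2} is vacuous for $k=2$.

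For $(2,2,3)$ I take the chain $\{e\}=G_0<G_1=\langle x\rangle<G_2=V_4<G_3=A_4$ with $s_1=\{1\}$, $s_2=\{2\}$, $s_3=\{3\}$, and check the ordering condition index by index:
\begin{itemize}
\item For $j=1\in s_1$ there are no $a<1$, so the condition holds.
\item For $j=2\in s_2$, we need $s_a\cap\{1,2\}=\emptyset$ for all $a<2$ or for all $a>2$. The first fails since $1\in s_1$, but $s_3\cap\{1,2\}=\emptyset$, so the second holds.
\item For $j=3\in s_3$ there are no $a>3$, so the condition holds.
\end{itemize}
The products of indices are $2,2,3$ as required. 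For $(3,2,2)$ I use the same chain with $s_1=\{3\}$, $s_2=\{2\}$, $s_3=\{1\}$; the check is symmetric. At $j=2\in s_2$ we have $s_1\cap\{1,2\}=\emptyset$.

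The main point to verify is that no chain realizes $(2,3,2)$, which is consistent with Theorem \ref{thm1}. Indeed, a middle set of size $3$ would have to come from a step of index $3$. Every chain in $A_4$ with prime-power steps has its $3$-step either first ($C_3$) or last. In the first case the middle index $2$ occurs after index $1$, which lies in $s_2$, and the condition fails for $s_2$'s element $1$ only trivially, but later the $2$-steps placed in $s_1$ and $s_3$ violate the condition. This check is not needed for the corollary, since nonexistence follows from Theorem \ref{thm1}. The corollary's proof thus reduces to this finite case list together with the citation of Theorem \ref{thm1}.
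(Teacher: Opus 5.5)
Your proof is correct and is essentially the paper's argument: Theorem \ref{thm1} with $k=3$ handles $(2,3,2)$, Corollary \ref{solv} handles the four $2$-fold cases, and the chain $\{e\}<\mathbb{Z}_2<V_4<A_4$ with the same choices of $s_1,s_2,s_3$ handles $(2,2,3)$ and $(3,2,2)$; your explicit remark that no factorization with $k\geq 4$ exists spells out what the paper leaves implicit. The one blemish is your last paragraph, whose sketch of why no chain realizes $(2,3,2)$ is garbled (the correct reason is that the only usable chain is $\{e\}<\mathbb{Z}_2<V_4<A_4$ with $s_2=\{3\}$, and the condition at $j=3$ then fails because both $s_1$ and $s_3$ meet $\{1,2\}$), but as you note it is superfluous, since Theorem \ref{thm1} already rules out any $3$-factorization of that type.
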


\begin{proof}
That no $3$-factorization of $A_4$ exists for $(2,3,2)$ is Bergman's original counterexample, and follows from Theorem \ref{thm1}.  As $A_4$ is solvable, by Corollary \ref{solv}, all possible $2$-factorizations are realized via Theorem \ref{thm2}.  The remaining factorizations to realize via Theorem \ref{thm2} are $(2,2,3)$ and $(3,2,2)$.  Take the composition series $$G_0 < G_1 < G_2 < G_3,$$ where $\{e\} = G_0$, $\mathbb{Z}_2 = G_1$, $\mathbb{Z}_2 \times \mathbb{Z}_2 = G_2$, and $A_4 = G_3$.  For the $(2,2,3)$ case, let $s_1 = \{1\}$, $s_2 = \{2\}$, and $s_3 = \{3\}$.  One sees that $s_3 \cap \{1,2\} = \emptyset$; and for each $t \in \{1,2,3\}$, $\prod_{i\in s_t}\,|G_i:\,G_{i-1}| = a_t$ holds.  For the $(3,2,2)$ case, let $s_1 = \{3\}$, $s_2 = \{2\}$, and $s_3 = \{1\}$.   In that case, one sees that $s_1 \cap \{1,2\} = \emptyset$; and for each $t \in \{1,2,3\}$, $\prod_{i\in s_t}\,|G_i:\,G_{i-1}| = a_t$ holds. 
\end{proof}

\section{Acknowledgment}
The author thanks George Bergman for helpful discussion and for providing the motivation for this paper.

\end{document}